\newtheorem{theorem}{Theorem}[section]
\newtheorem{corollary}{Corollary}[section]
\newtheorem{remark}{Remark}[section]
\newtheorem{algorithm}{Algorithm}[section]
\begin{document}
\title{A Multigrid Method for Nonlinear Eigenvalue Problems: Version 2\footnote{This work is supported in part
by the National Science Foundation of China (NSFC 91330202, 11371026, 11001259,
 11031006, 2011CB309703),  the National
Center for Mathematics and Interdisciplinary Science, CAS and the
President Foundation of AMSS-CAS.}}
\author{ Hehu Xie\footnote{LSEC, ICMSEC,
Academy of Mathematics and Systems Science, Chinese Academy of
Sciences,  Beijing 100190, China (hhxie@lsec.cc.ac.cn)} }
\date{}
\maketitle
\begin{abstract}
A multigrid method is proposed for solving nonlinear
eigenvalue problems by the finite element method. With this new scheme,
solving nonlinear eigenvalue problem is decomposed to a series of solutions of linear boundary value
problems on multilevel finite element spaces and a series of small scale nonlinear eigenvalue problems.
The computational work of this new scheme can reach almost the same
as the solution of the corresponding linear boundary value problem. Therefore, this type
of multilevel correction scheme improves the overfull efficiency of the nonlinear eigenvalue problem solving.

\vskip0.3cm {\bf Keywords.} nonlinear eigenvalue problem, finite element method, multilevel correction, multigrid.

\vskip0.2cm {\bf AMS subject classifications.} 65N30, 65N25, 65L15, 65B99.
\end{abstract}
\section{Introduction}
It is well know that solving large scale eigenvalue problems becomes a fundamental problem in modern science and engineering society.
Among these eigenvalue problems, there exist many nonlinear eigenvalue problems
\cite{Bao,BaoDu,CancesChakirMaday,ChenGongHeYangZhou,ChenGongZhou,ChenHeZhou,KohnSham,Martin,ParrYang,SulemSulem}.
However, it is not an easy task to solve high-dimensional nonlinear eigenvalue problems which come from
physical and chemical sciences.

The multigrid method and other efficient preconditioners provide an optimal order algorithm for solving
boundary value problems since they can obtain the theoretical error by the linear scale computation work.
We introduce the papers:  
Bramble and Zhang \cite{BrambleZhang},  Scott and Zhang \cite{ScottZhang}, Xu \cite{Xu},
and books:  Bramble \cite{Bramble}, Brenner and Scott \cite{BrennerScott},  Hackbusch \cite{Hackbusch_Book},
McCormick \cite{McCormick}, Shaidurov \cite{Shaidurov} to the interested readers.

Recently, we develop a type of
multigrid method for linear eigenvalue problems \cite{LinXie,LinXie_Multigrid,Xie_Steklov,Xie_Nonconforming,Xie_JCP}.
Then the aim of this paper is to present a type of multigrid
scheme  for nonlinear
eigenvalue problems based on the multilevel correction method \cite{LinXie}.
 With this method, solving nonlinear eigenvalue problem will not be
more difficult than solving the corresponding linear boundary value problem.
The multigrid method for nonlinear eigenvalue problem is based on a
series of finite element spaces with different level of accuracy
 which can be built with the same way as the multilevel
 method for boundary value problems \cite{Xu}.
It is worth pointing out that besides the multigrid method,
 other types of numerical algorithms such as BPX multilevel preconditioners,
algebraic multigrid method and domain decomposition preconditioners \cite{BrennerScott}
 can also act as the linear algebraic solvers for the
 multigrid method of the nonlinear eigenvalue problem.

The corresponding error and computational work estimates of the proposed multigrid
 scheme for the nonlinear eigenvalue problem will be analyzed. Based
on the analysis, the new method can obtain optimal errors with an almost optimal
computational work. The eigenvalue
multigrid procedure can be described as follows: (1)\
solve the nonlinear eigenvalue problem in the coarsest finite element space;
(2)\ solve an additional linear boundary value problem with multigrid method on the refined mesh using
the previous obtained eigenvalue multiplying the corresponding
eigenfunction as the load vector; (3)\ solve a nonlinear eigenvalue problem
again on the finite element space which is constructed by combining
the coarsest finite element space with the obtained eigenfunction
approximation in step (2). Then go to step (2) for next loop until stop.
In this method, we replace solving nonlinear eigenvalue problem on the finest
finite element space by solving a series of linear boundary value problems with multigrid scheme
in the corresponding series of finite element spaces and a series of nonlinear
eigenvalue problems in the coarsest finite element space. So this multigrid method
can improve the overfull efficiency of solving eigenvalue problems.

An outline of the paper goes as follows. In Section 2, we introduce
finite element method for nonlinear eigenvalue problem and some
 assumptions in this paper. Two correction steps are given in Sections
 3 and 4 based on fixed-point iteration and Newton iteration, respectively.
 In Section 5, we propose a type of multigrid
algorithm for solving the nonlinear eigenvalue problem by finite element method.
Section 6 is devoted to estimating the computational work for the multigrid method defined in Section 5.
Some concluding remarks are given in the last section.

\section{Finite element method for nonlinear eigenvalue problem}
In this section, we introduce the finite element method for the nonlinear
eigenvalue problem, some notation and error estimates of
the finite element approximation for eigenvalue problems.
The letter $C$ (with or without subscripts) denotes a generic
positive constant which may be different at its different occurrences through the paper.
For convenience, the symbols $\lesssim$, $\gtrsim$ and $\approx$
will be used in this paper. That $x_1\lesssim y_1, x_2\gtrsim y_2$
and $x_3\approx y_3$, mean that $x_1\leq C_1y_1$, $x_2 \geq c_2y_2$
and $c_3x_3\leq y_3\leq C_3x_3$ for some constants $C_1, c_2, c_3$
and $C_3$ that are independent of mesh sizes (see, e.g., \cite{Xu}).
We use the standard notation for Sobolev spaces $W^{s,p}(\Omega)$
and their associated norms, semi-norms
\cite{BrennerScott,Ciarlet}. For $p=2$, denote $H^s(\Omega)=W^{s,2}(\Omega)$
and $H_0^1(\Omega)=\{v\in H^1(\Omega): v|_{\partial\Omega}=0\}$, where
$v|_{\partial\Omega}$ is understand in the sense of trace, $\|\cdot\|_{s,\Omega}=\|\cdot\|_{s,2,\Omega}$,
and $(\cdot,\cdot)$ is the standard $L^2(\Omega)$ inner product.

In this paper, we are concerned with the following nonlinear eigenvalue problem:\\
Find $(\lambda,u)$ such that
\begin{equation}\label{Nonlinear_Eigenvalue_Problem}
\left\{
\begin{array}{rcl}
-\Delta u+f(x,u)&=&\lambda u,\ \ \ {\rm in}\ \Omega,\\
u&=&0,\ \ \ \ \ {\rm on}\ \partial\Omega,\\
\int_{\Omega}u^2d\Omega&=&1,
\end{array}
\right.
\end{equation}
where $\Omega\subset \mathcal{R}^d$ denotes the computing domain and $f(x,u)$
is a smooth enough function such that the eigenvalue problem (\ref{Nonlinear_Eigenvalue_Problem})
has only real eigenvalues.

In this paper, we set $V=H_0^1(\Omega)$.
For the aim of finite element discretization, we define the corresponding
weak eigenvalue problem as follows:\\
Find $(\lambda,u)\in \mathcal{R}\times V$ such that $b(u,u)=1$ and
\begin{eqnarray}
a(u,v)&=&\lambda b(u,v),\quad \forall v\in V, \label{weak_problem}
\end{eqnarray}
where
\begin{eqnarray*}
a(u,v):=\int_{\Omega}\big(\nabla u\nabla v+f(x,u)v\big)d\Omega,\ \ \
b(u,v):=\int_{\Omega}uvd\Omega.
\end{eqnarray*}

Now, let us define the finite element approximations of the problem
(\ref{weak_problem}). First we generate a shape-regular
decomposition of the computing domain $\Omega\subset \mathcal{R}^d\
(d=2,3)$ into triangles or rectangles for $d=2$ (tetrahedrons or
hexahedrons for $d=3$). The diameter of a cell $K\in\mathcal{T}_h$
is denoted by $h_K$. The mesh diameter $h$ describes the maximum
diameter of all cells $K\in\mathcal{T}_h$. Based on the mesh
$\mathcal{T}_h$, we can construct the linear finite element space denoted by
$V_h\subset V$. In order to apply multigrid scheme, we
start the process on the original mesh $\mathcal{T}_H$ with the mesh
size $H$ and the original coarse linear finite element space $V_H$
defined on the mesh $\mathcal{T}_H$.  We assume that
$V_h\subset V$ is a family of finite-dimensional spaces that satisfy
the following assumption:\\
For any $w \in V$
\begin{eqnarray}\label{Approximation_Property}
\lim_{h\rightarrow0}\inf_{v\in V_h}\|w-v\|_1 = 0.
\end{eqnarray}

The standard finite element method is to solve the following eigenvalue problem:\\
Find $(\bar\lambda_h, \bar u_h)\in \mathcal{R}\times V_h$ such that
$b(\bar u_h,\bar u_h)=1$ and
\begin{eqnarray}\label{weak_problem_Discrete}
a(\bar u_h,v_h)&=&\bar\lambda_hb(\bar u_h,v_h),\quad\ \  \ \forall v_h\in V_h.
\end{eqnarray}
Then we define
\begin{eqnarray}
\delta_h(u)=\inf_{v_h\in V_h}\|u-v_h\|_1.
\end{eqnarray}

For generality, we only state the following assumptions about the error estimate for the eigenpair
approximation $(\bar\lambda_h,\bar u_h)$ defined by (\ref{weak_problem_Discrete}) (see, e.g., \cite{CancesChakirMaday,ChenHeZhou}).

{\bf Assumption A1}:
The eigenpair approximation $(\bar\lambda_h,\bar u_h)$ of (\ref{weak_problem_Discrete}) has the following
error estimates
\begin{eqnarray}
\|u-\bar u_h\|_1 &\lesssim& \delta_h(u),\label{Error_Estimate_Eigenfunction}\\
|\lambda-\bar\lambda_h|+\|u-\bar u_h\|_0&\lesssim & \eta_a(V_h)\|u-\bar u_h\|_1,\label{Error_Estimate_Eigenvalue}
\end{eqnarray}
where $\eta_a(V_h)$ depends on the finite dimensional space $V_h$ and has the following property
\begin{eqnarray}\label{Property_Eta_h}
\lim_{h\rightarrow 0}\eta_a(V_h)=0, \ \ \ \eta_a(\widetilde{V}_{h})\leq \eta_a(V_h)\ \ {\rm if}\
V_h\subset \widetilde{V}_h\subset V.
\end{eqnarray}

{\bf Assumption A2}:\ Assume $V^h$ is a subspace of $V_h$.
Let us define the eigenpair approximation $(\lambda^h,u^h)$ by solving the eigenvalue problem as follows:

Find $(\lambda^h,u^h)\in\mathcal{R}\times V^h$ such that $b(u^h,u^h)=$ and
\begin{eqnarray}\label{Nonlinear_Eigenvalue_Problem_subspace}
a(u^h,v^h)&=&\lambda^hb(u^h,v^h),\ \ \ \ \forall v^h\in V^h.
\end{eqnarray}
Then the following error estimates hold
\begin{eqnarray}
\|\bar{u}_h-u^h\|_1 &\lesssim& \delta_h(\bar{u}_h),\label{Error_Estimate_Eigenfunction_Subspace}\\
|\bar{\lambda}_h-\lambda^h|+\|\bar{u}_h-u^h\|_0&\lesssim & \eta_a(V^h)\|\bar{u}_h-u^h\|_1,
\label{Error_Estimate_Eigenvalue_Subspace}
\end{eqnarray}
where
\begin{eqnarray}\label{Detlat_Definition_Subspace}
\delta_h(\bar{u}_h):=\inf_{v^h\in V^h}\|\bar{u}_h-v^h\|_1.
\end{eqnarray}

In order to design and analyze the multilevel correction method for the nonlinear eigenvalue problems, we also
need the following assumptions for the nonlinear function $f(\cdot,\cdot):\mathcal{R}\times V\rightarrow \mathcal{R}$.


{\bf Assumption B}:
The nonlinear function $f(x,\cdot)$ has the following estimate
\begin{eqnarray}\label{Nonlinear_Estimate_Fix}
|(f(x,w)-f(x,v),\psi)|\lesssim \|w-v\|_0\|\psi\|_1,\ \ \ \forall w\in V, \ \ \forall v\in V,\ \ \forall \psi\in V.
\end{eqnarray}

{\bf Assumption C}:
The nonlinear function $f(x,\cdot)$ has the following estimate
\begin{eqnarray}\label{Nonlinear_Estimate_Newton}
|(f(x,w)-f(x,v)-f_v(x,v)(w-v),\psi)|&\lesssim& \|w-v\|_0\|\psi\|_1,\ \ \forall w\in V,\nonumber\\
&&\ \ \forall v\in V,\ \ \forall \psi\in V.
\end{eqnarray}
For more discussions about the function $f(x,\cdot)$, please refer to
\cite{CancesChakirMaday,ChenGongHeYangZhou,Xu_Nonlinear} and the papers cited therein.

\section{One correction step based on fixed-point iteration}
In this section, we introduce a type of correction step based on the fixed-point iteration
 to improve the accuracy of the current eigenpair approximation.
This correction step contains solving an auxiliary linear boundary value problem with multigrid method
in the finer finite element space and a nonlinear eigenvalue problem on the
coarsest finite element space.

Assume we have obtained an eigenpair approximation
$(\lambda_{h_k},u_{h_k})\in\mathcal{R}\times V_{h_k}$. Now we
introduce a type of correction step to improve the accuracy of the
current eigenpair approximation $(\lambda_{h_k},u_{h_k})$. Let
$V_{h_{k+1}}\subset V$ be a finer finite element space such that
$V_{h_k}\subset V_{h_{k+1}}$. Based on this finer finite element space,
we define the following correction step.

\begin{algorithm}\label{Correction_Step_Fix}
One Correction Step based on Fixed-point Iteration

\begin{enumerate}
\item Define the following auxiliary boundary value problem:

Find $\widehat{u}_{h_{k+1}}\in V_{h_{k+1}}$ such that
\begin{eqnarray}\label{aux_problem_fix}
\hskip-0.5cm (\nabla\widehat{u}_{h_{k+1}},\nabla v_{h_{k+1}})=
\lambda_{h_k}b(u_{h_k},v_{h_{k+1}})-(f(x,u_{h_k}),v_{h_{k+1}}), \ \forall v_{h_{k+1}}\in V_{h_{k+1}}.
\end{eqnarray}
Solve this equation with multigrid method to obtain an approximation
$\widetilde{u}_{h_{k+1}}\in V_{h_{k+1}}$ with error estimate
\begin{eqnarray}\label{Multigrid_Accuracy}
\|\widehat{u}_{h_{k+1}}-\widetilde{u}_{h_{k+1}}\|_a\leq C\eta_a(V_{h_k})\delta_{h_k}(u).
\end{eqnarray}
\item  Define a new finite element
space $V_{H,h_{k+1}}=V_H+{\rm span}\{\widetilde{u}_{h_{k+1}}\}$ and solve
the following eigenvalue problem:

Find $(\lambda_{h_{k+1}},u_{h_{k+1}})\in\mathcal{R}\times V_{H,h_{k+1}}$ such
that $b(u_{h_{k+1}},u_{h_{k+1}})=1$ and
\begin{eqnarray}\label{Eigen_Augment_Problem_fix}
a(u_{h_{k+1}},v_{H,h_{k+1}})&=&\lambda_{h_{k+1}} b(u_{h_{k+1}},v_{H,h_{k+1}}),\ \ \
\forall v_{H,h_{k+1}}\in V_{H,h_{k+1}}.
\end{eqnarray}
\end{enumerate}
Summarize above two steps into
\begin{eqnarray*}
(\lambda_{h_{k+1}},u_{h_{k+1}})={\it
Correction}(V_H,\lambda_{h_k},u_{h_k},V_{h_{k+1}}).
\end{eqnarray*}
\end{algorithm}
\begin{theorem}\label{Error_Estimate_One_Correction_Theorem_Fix}
Assume {\bf Assumptions A1}, {\bf A2} and {\bf B} hold.
The resultant approximation $(\lambda_{h_{k+1}},u_{h_{k+1}})\in\mathcal{R}\times V_{h_{k+1}}$ by Algorithm \ref{Correction_Step_Fix}
and the  eigenpair approximation $(\bar{\lambda}_{h_{k+1}},\bar{u}_{h_{k+1}})$ by the direct finite element method
in $V_{h_{k+1}}$ have the following estimates
\begin{eqnarray}
\|\bar{u}_{h_{k+1}}-u_{h_{k+1}}\|_1  &\lesssim& \varepsilon_{h_{k+1}}(u),
\label{Estimate_u_u_h_{k+1}_fix}\\
|\bar{\lambda}_{h_{k+1}}-\lambda_{h_{k+1}}|+\|\bar u_{h_{k+1}}-u_{h_{k+1}}\|_0
&\lesssim&\eta_a(V_H)\|\bar{u}_{h_{k+1}}-u_{h_{k+1}}\|_1,
\label{Estimate_u_h_{k+1}_Nagative_fix}\\
|(f(x,\bar{u}_{h_{k+1}})-f(x,u_{h_{k+1}}),v)|
&\lesssim&\eta_a(V_H)\|\bar{u}_{h_{k+1}}-u_{h_{k+1}}\|_1\|v\|_1,
 \ \forall v\in V.\label{Nonlinear_Estimate_k+1_fix}
\end{eqnarray}
where $\varepsilon_{h_{k+1}}(u):= \eta_a(V_{h_k})\delta_{h_k}(u)
+\|\bar{u}_{h_k}-u_{h_k}\|_0+|\bar{u}_{h_k}-\lambda_{h_k}|$.
\end{theorem}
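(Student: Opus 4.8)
\textbf{Overall strategy.} The plan is to treat the correction step as a perturbation of the direct finite element discretization in $V_{h_{k+1}}$, and to bound the difference $\bar u_{h_{k+1}} - u_{h_{k+1}}$ by comparing both against a common "reference" object. The natural reference is the auxiliary problem: note that $(\bar\lambda_{h_{k+1}},\bar u_{h_{k+1}})$ solves $(\nabla\bar u_{h_{k+1}},\nabla v)=\bar\lambda_{h_{k+1}}b(\bar u_{h_{k+1}},v)-(f(x,\bar u_{h_{k+1}}),v)$ for all $v\in V_{h_{k+1}}$, which is exactly the auxiliary equation \eqref{aux_problem_fix} but with data $(\bar\lambda_{h_{k+1}},\bar u_{h_{k+1}})$ in place of $(\lambda_{h_k},u_{h_k})$. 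So I would introduce $\widehat u_{h_{k+1}}$ (the exact solve of \eqref{aux_problem_fix}) and estimate $\|\bar u_{h_{k+1}}-\widehat u_{h_{k+1}}\|_1$ by subtracting the two boundary value problems: the right-hand side difference is $(\bar\lambda_{h_{k+1}}b(\bar u_{h_{k+1}},v)-\lambda_{h_k}b(u_{h_k},v)) - (f(x,\bar u_{h_{k+1}})-f(x,u_{h_k}),v)$. The first part is controlled in $\|\cdot\|_0$ (hence $\|\cdot\|_1$-dually) by $|\bar\lambda_{h_{k+1}}-\lambda_{h_k}|$ and $\|\bar u_{h_{k+1}}-u_{h_k}\|_0$; the $f$-difference is bounded via \textbf{Assumption B} by $\|\bar u_{h_{k+1}}-u_{h_k}\|_0\|v\|_1$. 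Then a triangle inequality $\|\bar u_{h_{k+1}}-u_{h_k}\|_0 \le \|\bar u_{h_{k+1}}-\bar u_{h_k}\|_0 + \|\bar u_{h_k}-u_{h_k}\|_0$, combined with the a priori estimates \eqref{Error_Estimate_Eigenfunction}–\eqref{Error_Estimate_Eigenvalue} for levels $h_k$ and $h_{k+1}$, should collapse everything into the quantities appearing in $\varepsilon_{h_{k+1}}(u)$. Adding the multigrid error \eqref{Multigrid_Accuracy} gives $\|\bar u_{h_{k+1}}-\widetilde u_{h_{k+1}}\|_1 \lesssim \varepsilon_{h_{k+1}}(u)$.

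\textbf{From $\widetilde u_{h_{k+1}}$ to $u_{h_{k+1}}$.} Next I would exploit that the final eigenvalue solve \eqref{Eigen_Augment_Problem_fix} takes place in $V_{H,h_{k+1}}=V_H+\mathrm{span}\{\widetilde u_{h_{k+1}}\}$, which contains $\widetilde u_{h_{k+1}}$. The key observation is that $(\lambda_{h_{k+1}},u_{h_{k+1}})$ is the Galerkin eigenpair on this small space, so by \textbf{Assumption A2} applied with $V^h=V_{H,h_{k+1}}$ (viewed inside some ambient $V_h$ containing it, e.g. $V_{h_{k+1}}$ itself), we get $\|\bar u_{h_{k+1}}-u_{h_{k+1}}\|_1 \lesssim \delta(\bar u_{h_{k+1}}) = \inf_{v\in V_{H,h_{k+1}}}\|\bar u_{h_{k+1}}-v\|_1 \le \|\bar u_{h_{k+1}}-c\,\widetilde u_{h_{k+1}}\|_1$ for a suitable scalar $c$ (normalization), and this is controlled by $\|\bar u_{h_{k+1}}-\widetilde u_{h_{k+1}}\|_1 \lesssim \varepsilon_{h_{k+1}}(u)$ from the previous step — establishing \eqref{Estimate_u_u_h_{k+1}_fix}. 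For \eqref{Estimate_u_h_{k+1}_Nagative_fix}, since $V_H\subset V_{H,h_{k+1}}$ we have $\eta_a(V_{H,h_{k+1}})\le\eta_a(V_H)$ by \eqref{Property_Eta_h}, so the $L^2$/eigenvalue estimate \eqref{Error_Estimate_Eigenvalue_Subspace} of \textbf{Assumption A2} directly yields the bound with factor $\eta_a(V_H)$. Finally \eqref{Nonlinear_Estimate_k+1_fix} follows immediately from \textbf{Assumption B} ($\|\cdot\|_0$ on the $f$-difference) composed with \eqref{Estimate_u_h_{k+1}_Nagative_fix}, since $\|\bar u_{h_{k+1}}-u_{h_{k+1}}\|_0\lesssim\eta_a(V_H)\|\bar u_{h_{k+1}}-u_{h_{k+1}}\|_1$.

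\textbf{Anticipated obstacles.} The main technical nuisance is the normalization mismatch: $\widetilde u_{h_{k+1}}$ is not $L^2$-normalized, and $u_{h_{k+1}}$ is, so the "$c\,\widetilde u_{h_{k+1}}$" scaling in the infimum must be handled carefully — one shows $b(\widetilde u_{h_{k+1}},\widetilde u_{h_{k+1}})=1+O(\varepsilon)$ so the rescaling costs only a higher-order term. A second delicate point is that \textbf{Assumption A2}'s abstract error estimate is stated for a subspace $V^h$ of a fixed space $V_h$ with the $\eta_a$ monotonicity; I need to verify the hypothesis framework genuinely applies to the tiny augmented space $V_{H,h_{k+1}}$ (in particular that $\bar u_{h_{k+1}}$ plays the role of the "true" eigenfunction there and the constants are uniform). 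A third subtlety, easy to overlook, is the interplay of two discretization levels in the duality argument — the $\bar u_{h_{k+1}}$ versus $\bar u_{h_k}$ gap must be bounded using $\delta_{h_k}(u)$-type quantities and the nesting $V_{h_k}\subset V_{h_{k+1}}$ so as not to introduce terms of the wrong order; getting all cross-terms to land inside $\varepsilon_{h_{k+1}}(u)$ as defined (noting the slightly unusual last summand $|\bar u_{h_k}-\lambda_{h_k}|$, presumably a typo for $|\bar\lambda_{h_k}-\lambda_{h_k}|$) is where the bookkeeping is heaviest.
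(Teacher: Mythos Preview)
Your proposal is correct and follows essentially the same route as the paper's proof: subtract the auxiliary problem \eqref{aux_problem_fix} from the direct discretization \eqref{weak_problem_Discrete} on $V_{h_{k+1}}$, bound the right-hand side via \textbf{Assumption~B} and a triangle-inequality split through $(\bar\lambda_{h_k},\bar u_{h_k})$ using \textbf{Assumption~A1}, add the multigrid error \eqref{Multigrid_Accuracy}, and then invoke \textbf{Assumption~A2} on the augmented space $V_{H,h_{k+1}}$ together with $\eta_a(V_{H,h_{k+1}})\le\eta_a(V_H)$ from \eqref{Property_Eta_h}. Your normalization worry is unnecessary here: since $\widetilde u_{h_{k+1}}\in V_{H,h_{k+1}}$, the paper simply takes $c=1$ in the infimum and bounds $\inf_{v\in V_{H,h_{k+1}}}\|\bar u_{h_{k+1}}-v\|_1\le\|\bar u_{h_{k+1}}-\widetilde u_{h_{k+1}}\|_1$ directly; and yes, the last summand of $\varepsilon_{h_{k+1}}(u)$ is a typo for $|\bar\lambda_{h_k}-\lambda_{h_k}|$.
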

\begin{proof}
From (\ref{weak_problem_Discrete}) and (\ref{aux_problem_fix}),
the following inequalities hold for any $v_{h_{k+1}}\in V_{h_{k+1}}$
\begin{eqnarray*}
&&\big(\nabla(\bar{u}_{h_{k+1}}-\widehat{u}_{h_{k+1}}),\nabla v_{h_{k+1}}\big)\nonumber\\
&=&b(\bar{\lambda}_{h_{k+1}}\bar{u}_{h_{k+1}}-\lambda_{h_k}u_{h_k},v_{h_{k+1}})+
(f(x,u_{h_k})-f(x,\bar{u}_{h_{k}}),v_{h_{k+1}})\nonumber\\
&\lesssim&\big(|\bar{\lambda}_{h_{k+1}}-\lambda_{h_k}|
+\|\bar{u}_{h_{k+1}}-u_{h_k}\|_0\big)\|v_{h_{k+1}}\|_1\nonumber\\
&\lesssim&\big(|\bar{\lambda}_{h_{k+1}}-\bar{\lambda}_{h_k}|+|\bar{\lambda}_{h_k}-\lambda_{h_k}|
+\|\bar{u}_{h_{k+1}}-\bar{u}_{h_k}\|_0+\|\bar{u}_{h_k}-u_{h_k}\|_0\big)\|v_{h_{k+1}}\|_1\nonumber\\
&\lesssim& \big(\eta_a(V_{h_k})\delta_{h_k}(u)
+\|\bar{u}_{h_k}-u_{h_k}\|_0+|\bar{u}_{h_k}-\lambda_{h_k}|\big)\|v_{h_{k+1}}\|_1.
\end{eqnarray*}
Then we have
\begin{eqnarray}\label{Estimate_u_tilde_u_h_{k+1}_fix}
\|\bar{u}_{h_{k+1}}-\widehat{u}_{h_{k+1}}\|_1
\lesssim \eta_a(V_{h_k})\delta_{h_k}(u)+\|\bar{u}_{h_k}-u_{h_k}\|_0+|\bar{u}_{h_k}-\lambda_{h_k}|.
\end{eqnarray}
Combining (\ref{Estimate_u_tilde_u_h_{k+1}_fix}) and the accuracy (\ref{Multigrid_Accuracy}) leads to the
following estimate
\begin{eqnarray}\label{Error_tilde_u_h_{k+1}_u_final_fix}
\|\bar{u}_{h_{k+1}}-\widetilde{u}_{h_{k+1}}\|_1\lesssim
\eta_a(V_{h_k})\delta_{h_k}(u)+\|\bar{u}_{h_k}-u_{h_k}\|_0+|\bar{u}_{h_k}-\lambda_{h_k}|.
\end{eqnarray}
Now we come to estimate the error for the eigenpair solution
$(\lambda_{h_{k+1}},u_{h_{k+1}})$ of problem (\ref{Eigen_Augment_Problem_fix}).
Based on  {\bf Assumptions A1}, {\bf A2} and {\bf B}, and the definition of $V_{H,h_{k+1}}$,
the following estimates hold
\begin{eqnarray}\label{Error_u_u_h_{k+1}_fix}
\|\bar{u}_{h_{k+1}}-u_{h_{k+1}}\|_1\lesssim \inf_{v_{H,h_{k+1}}\in
V_{H,h_{k+1}}}\|\bar{u}_{h_{k+1}}-v_{H,h_{k+1}}\|_1
\lesssim \|\bar{u}_{h_{k+1}}-\widetilde{u}_{h_{k+1}}\|_1,
\end{eqnarray}
and
\begin{eqnarray}
|\bar{\lambda}_{h_{k+1}}-\lambda_{h_{k+1}}|+\|\bar{u}_{h_{k+1}}-u_{h_{k+1}}\|_0
&\lesssim& \eta_a(V_{H,h_{k+1}})\|\bar{u}_{h_{k+1}}-u_{h_{k+1}}\|_1,\label{Error_u_u_h_{k+1}_Negative_Fix}\\
|(f(x,\bar{u}_{h_{k+1}})-f(x,u_{h_{k+1}}),v)| &\lesssim&
\eta_a(V_{H,h_{k+1}})\|\bar{u}_{h_{k+1}}-u_{h_{k+1}}\|_1\|v\|_1,\nonumber\\
&&\ \ \ \ \ \ \quad\quad\quad\quad\quad\quad\  \forall v\in V.\label{Error_Nonlinear_k+1_Fix}
\end{eqnarray}
From (\ref{Property_Eta_h}), (\ref{Error_tilde_u_h_{k+1}_u_final_fix}), (\ref{Error_u_u_h_{k+1}_fix}),
(\ref{Error_u_u_h_{k+1}_Negative_Fix}) and (\ref{Error_Nonlinear_k+1_Fix}), we can obtain the desired results
(\ref{Estimate_u_u_h_{k+1}_fix}), (\ref{Estimate_u_h_{k+1}_Nagative_fix}) and
(\ref{Nonlinear_Estimate_k+1_fix}).
\end{proof}

\section{One correction step based on Newton iteration}
In this section, we present another type of correction step based on Newton iteration
(always has better convergence property)
to improve the accuracy of the given eigenpair approximations.
This correction method also contains solving an auxiliary linear
 boundary value problem with multigrid method
in the finer finite element space and a nonlinear eigenvalue problem on the
coarsest finite element space.

Similarly, assume we have obtained an eigenpair approximation
$(\lambda_{h_k},u_{h_k})\in\mathcal{R}\times V_{h_k}$.
Let $V_{h_{k+1}}\subset V$ be a finer finite element space such that
$V_{h_k}\subset V_{h_{k+1}}$.

In this section, we define the bilinear form $a_{h_k}(w,v)$ as follows
\begin{eqnarray}\label{Definition_a_h_k}
a_{h_k}(w,v)=(\nabla w,\nabla v)+(f_u(x,u_{h_k})w,v).
\end{eqnarray}
Here, we assume the linearized operator $L_u:=-\Delta +f_u(x,u)$ is nonsingular and
 $u_{h_k}$ is close enough to $u$
such that the following properties hold \cite[Lemma 2.1]{Xu_Nonlinear}
\begin{eqnarray}
\sup_{0\neq v_{h_{k+1}}\in V_{h_{k+1}}}\frac{a_{h_k}(w_{h_{k+1}},v_{h_{k+1}})}{\|v_{h_{k+1}}\|_1}
&\gtrsim& \|w_{h_{k+1}}\|_1,\ \ \ \ \ \forall w_{h_{k+1}}\in V_{h_{k+1}},\label{Inf_Sup_Condition}\\
|a_{h_k}(w,v)|&\lesssim& \|w\|_1\|v\|_1,\ \ \ \ \ \forall w\in V,\ \forall v\in V.\label{Boundednness}
\end{eqnarray}

Now we define the correction step as follows.

\begin{algorithm}\label{Correction_Step_Newton}
One Correction Step based on Newton Iteration

\begin{enumerate}
\item Define the following auxiliary boundary value problem:

Find $\widehat{e}_{h_{k+1}}\in V_{h_{k+1}}$ such that
\begin{eqnarray}\label{aux_problem}
a_{h_k}(\widehat{e}_{h_{k+1}},v_{h_{k+1}})&=&\lambda_{h_k}b(u_{h_k},v_{h_{k+1}})-(\nabla u_{h_k},\nabla v_{h_{k+1}})
,\nonumber\\
&&\quad\quad\ -(f(x,u_{h_k}),v_{h_{k+1}}), \ \ \forall v_{h_{k+1}}\in V_{h_{k+1}}.
\end{eqnarray}
Solve this equation with multigrid method \cite{Shaidurov,Xu_Two_Grid} to obtain an approximation
$\widetilde{e}_{h_{k+1}}\in V_{h_{k+1}}$ with error estimate
$\|\widehat{e}_{h_{k+1}}-\widetilde{e}_{h_{k+1}}\|_a\leq C\delta_{h_{k+1}}(u)$
 and set $\widetilde{u}_{h_{k+1}}=u_{h_k}+\widetilde{e}_{h_{k+1}}$.
\item  Define a new finite element
space $V_{H,h_{k+1}}=V_H+{\rm span}\{\widetilde{u}_{h_{k+1}}\}$ and solve
the following eigenvalue problem:

Find $(\lambda_{h_{k+1}},u_{h_{k+1}})\in\mathcal{R}\times V_{H,h_{k+1}}$ such
that $b(u_{h_{k+1}},u_{h_{k+1}})=1$ and
\begin{eqnarray}\label{Eigen_Augment_Problem}
a(u_{h_{k+1}},v_{H,h_{k+1}})&=&\lambda_{h_{k+1}} b(u_{h_{k+1}},v_{H,h_{k+1}}),\ \ \
\forall v_{H,h_{k+1}}\in V_{H,h_{k+1}}.
\end{eqnarray}
\end{enumerate}
Summarize above two steps onto
\begin{eqnarray*}
(\lambda_{h_{k+1}},u_{h_{k+1}})={\it
Correction}(V_H,\lambda_{h_k},u_{h_k},V_{h_{k+1}}).
\end{eqnarray*}
\end{algorithm}
\begin{theorem}\label{Error_Estimate_One_Correction_Theorem}
Assume {\bf Assumptions A1}, {\bf A2} and {\bf C} hold.
The resultant approximation $(\lambda_{h_{k+1}},u_{h_{k+1}})\in\mathcal{R}\times V_{h_{k+1}}$ by Algorithm \ref{Correction_Step_Newton}
and the eigenpair approximation $(\bar{\lambda}_{h_{k+1}},\bar{u}_{h_{k+1}})$ by the direct finite element method
in $V_{h_{k+1}}$ have the following estimates
\begin{eqnarray}
&&\|\bar{u}_{h_{k+1}}-u_{h_{k+1}}\|_1\lesssim \varepsilon_{h_{k+1}}(u),\label{Estimate_u_u_h_{k+1}}\\
&&|\bar{\lambda}_{h_{k+1}}-\lambda_{h_{k+1}}|+\|\bar{u}_{h_{k+1}}-u_{h_{k+1}}\|_0
\lesssim\eta_a(V_H)\|\bar{u}_{h_{k+1}}-u_{h_{k+1}}\|_1,
\label{Estimate_u_h_{k+1}_Nagative}\\
&&|(f(x,\bar{u}_{h_{k+1}})-f(x,u_{h_{k+1}})-f_u(x,u_{h_{k+1}})(\bar{u}_{h_{k+1}}-u_{h_{k+1}}),v)|
\nonumber\\
&&\quad\quad\quad\quad\quad\quad\quad\quad\quad\quad \lesssim\eta_a(V_H)\|\bar{u}_{h_{k+1}}-u_{h_{k+1}}\|_1\|v\|_1,
\ \ \ \forall v\in V,\label{Nonlinear_Estimate_k+1}
\end{eqnarray}
where $\varepsilon_{h_{k+1}}(u):=\eta_a(V_{h_k})\delta_{h_k}(u)+\|\bar{u}_{h_k}-u_{h_k}\|_0
+|\bar{\lambda}_{h_k}-\lambda_{h_k}|$.
\end{theorem}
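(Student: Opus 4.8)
The plan is to mirror the proof of Theorem~\ref{Error_Estimate_One_Correction_Theorem_Fix}, replacing the pure-residual auxiliary problem by the Newton-linearized one governed by $a_{h_k}$. Set $\widehat{u}_{h_{k+1}}:=u_{h_k}+\widehat{e}_{h_{k+1}}\in V_{h_{k+1}}$. Adding $a_{h_k}(u_{h_k},v_{h_{k+1}})$ to both sides of the auxiliary problem (\ref{aux_problem}) and using the definition (\ref{Definition_a_h_k}) of $a_{h_k}$, the two $(\nabla u_{h_k},\nabla v_{h_{k+1}})$ terms cancel and one obtains, for all $v_{h_{k+1}}\in V_{h_{k+1}}$,
\[
a_{h_k}(\widehat{u}_{h_{k+1}},v_{h_{k+1}})=\lambda_{h_k}b(u_{h_k},v_{h_{k+1}})-(f(x,u_{h_k}),v_{h_{k+1}})+(f_u(x,u_{h_k})u_{h_k},v_{h_{k+1}}).
\]
Rewriting the discrete eigenvalue equation (\ref{weak_problem_Discrete}) for $(\bar{\lambda}_{h_{k+1}},\bar{u}_{h_{k+1}})$ by means of $a_{h_k}$ gives
\[
a_{h_k}(\bar{u}_{h_{k+1}},v_{h_{k+1}})=\bar{\lambda}_{h_{k+1}}b(\bar{u}_{h_{k+1}},v_{h_{k+1}})-(f(x,\bar{u}_{h_{k+1}}),v_{h_{k+1}})+(f_u(x,u_{h_k})\bar{u}_{h_{k+1}},v_{h_{k+1}}),
\]
and subtracting these two relations yields the key identity
\begin{align*}
a_{h_k}(\bar{u}_{h_{k+1}}-\widehat{u}_{h_{k+1}},v_{h_{k+1}})
&=b(\bar{\lambda}_{h_{k+1}}\bar{u}_{h_{k+1}}-\lambda_{h_k}u_{h_k},v_{h_{k+1}})\\
&\quad-\big(f(x,\bar{u}_{h_{k+1}})-f(x,u_{h_k})-f_u(x,u_{h_k})(\bar{u}_{h_{k+1}}-u_{h_k}),\,v_{h_{k+1}}\big),
\end{align*}
in which, thanks to the Newton linearization, the nonlinear contribution has collapsed into precisely the Taylor remainder of $f$ at the base point $u_{h_k}$.

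Next I would bound the right-hand side. The first term is handled exactly as in Theorem~\ref{Error_Estimate_One_Correction_Theorem_Fix}: write $\bar{\lambda}_{h_{k+1}}\bar{u}_{h_{k+1}}-\lambda_{h_k}u_{h_k}=\bar{\lambda}_{h_{k+1}}(\bar{u}_{h_{k+1}}-u_{h_k})+(\bar{\lambda}_{h_{k+1}}-\lambda_{h_k})u_{h_k}$, insert $\bar{\lambda}_{h_k}$ and $\bar{u}_{h_k}$, and use the eigenpair error estimates of {\bf Assumption A1} together with the monotonicity properties $\eta_a(V_{h_{k+1}})\le\eta_a(V_{h_k})$ and $\delta_{h_{k+1}}(u)\le\delta_{h_k}(u)$; this produces a bound $\lesssim\varepsilon_{h_{k+1}}(u)\,\|v_{h_{k+1}}\|_1$. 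The second term is exactly of the form controlled by {\bf Assumption C} with $w=\bar{u}_{h_{k+1}}$ and $v=u_{h_k}$, hence is $\lesssim\|\bar{u}_{h_{k+1}}-u_{h_k}\|_0\,\|v_{h_{k+1}}\|_1$; and since $\|\bar{u}_{h_{k+1}}-u_{h_k}\|_0\le\|\bar{u}_{h_{k+1}}-\bar{u}_{h_k}\|_0+\|\bar{u}_{h_k}-u_{h_k}\|_0\lesssim\eta_a(V_{h_k})\delta_{h_k}(u)+\|\bar{u}_{h_k}-u_{h_k}\|_0\lesssim\varepsilon_{h_{k+1}}(u)$, this too is $\lesssim\varepsilon_{h_{k+1}}(u)\,\|v_{h_{k+1}}\|_1$. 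Applying the inf--sup condition (\ref{Inf_Sup_Condition}) for $a_{h_k}$ on $V_{h_{k+1}}$ with $w_{h_{k+1}}=\bar{u}_{h_{k+1}}-\widehat{u}_{h_{k+1}}$ then gives $\|\bar{u}_{h_{k+1}}-\widehat{u}_{h_{k+1}}\|_1\lesssim\varepsilon_{h_{k+1}}(u)$. Since $\widehat{u}_{h_{k+1}}-\widetilde{u}_{h_{k+1}}=\widehat{e}_{h_{k+1}}-\widetilde{e}_{h_{k+1}}$, combining with the accuracy of the multigrid solve in Step~1 of Algorithm~\ref{Correction_Step_Newton} (which must be taken of order $\varepsilon_{h_{k+1}}(u)$) yields $\|\bar{u}_{h_{k+1}}-\widetilde{u}_{h_{k+1}}\|_1\lesssim\varepsilon_{h_{k+1}}(u)$.

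It then remains to invoke {\bf Assumption A2} with $V^h=V_{H,h_{k+1}}\subset V_{h_{k+1}}$, as at the end of the proof of Theorem~\ref{Error_Estimate_One_Correction_Theorem_Fix}. Since the solution $(\lambda_{h_{k+1}},u_{h_{k+1}})$ of (\ref{Eigen_Augment_Problem}) is the Galerkin approximation of $(\bar{\lambda}_{h_{k+1}},\bar{u}_{h_{k+1}})$ in $V_{H,h_{k+1}}$ and $\widetilde{u}_{h_{k+1}}\in V_{H,h_{k+1}}$, one gets $\|\bar{u}_{h_{k+1}}-u_{h_{k+1}}\|_1\lesssim\delta_{h_{k+1}}(\bar{u}_{h_{k+1}})\le\|\bar{u}_{h_{k+1}}-\widetilde{u}_{h_{k+1}}\|_1\lesssim\varepsilon_{h_{k+1}}(u)$, which is (\ref{Estimate_u_u_h_{k+1}}); the estimate (\ref{Estimate_u_h_{k+1}_Nagative}) follows from the second bound of {\bf Assumption A2} together with $\eta_a(V_{H,h_{k+1}})\le\eta_a(V_H)$ (a consequence of $V_H\subset V_{H,h_{k+1}}$ and (\ref{Property_Eta_h})); and (\ref{Nonlinear_Estimate_k+1}) follows by applying {\bf Assumption C} once more, now with $w=\bar{u}_{h_{k+1}}$ and $v=u_{h_{k+1}}$, and absorbing the factor $\|\bar{u}_{h_{k+1}}-u_{h_{k+1}}\|_0$ with the bound (\ref{Estimate_u_h_{k+1}_Nagative}) just proved. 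The main obstacle is the first paragraph: producing the identity for $a_{h_k}(\bar{u}_{h_{k+1}}-\widehat{u}_{h_{k+1}},\cdot)$ in the form above, where the nonlinear part reduces exactly to the Taylor remainder governed by {\bf Assumption C} --- the cruder difference estimate of {\bf Assumption B} used in the fixed-point case is no longer the right tool here, since $\widehat{e}_{h_{k+1}}$ is defined through the linearized form $a_{h_k}$. Everything afterwards is routine bookkeeping with the stability and boundedness (\ref{Inf_Sup_Condition})--(\ref{Boundednness}) of $a_{h_k}$ and the abstract perturbation estimates of {\bf Assumption A2}.
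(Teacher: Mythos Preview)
Your proposal is correct and follows essentially the same approach as the paper: you derive the same key identity for $a_{h_k}(\bar{u}_{h_{k+1}}-\widehat{u}_{h_{k+1}},\cdot)$ (the paper writes it as $a_{h_k}(\bar{u}_{h_{k+1}}-u_{h_k}-\widehat{e}_{h_{k+1}},\cdot)$ but this is the same object), bound the Taylor remainder via {\bf Assumption C} and the eigenvalue/function discrepancies via {\bf Assumption A1}, apply the inf--sup condition (\ref{Inf_Sup_Condition}), add the multigrid-solve error, and then finish with {\bf Assumption A2} on $V_{H,h_{k+1}}$ together with $\eta_a(V_{H,h_{k+1}})\le\eta_a(V_H)$. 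Your parenthetical remark that the multigrid accuracy ``must be taken of order $\varepsilon_{h_{k+1}}(u)$'' in fact matches what the paper's proof uses, namely $\eta_a(V_{h_k})\delta_{h_k}(u)$, even though Algorithm~\ref{Correction_Step_Newton} as stated only requires $C\delta_{h_{k+1}}(u)$.
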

\begin{proof}
From (\ref{weak_problem_Discrete}) and (\ref{aux_problem}),
the following estimates hold for any $v_{h_{k+1}}\in V_{h_{k+1}}$
\begin{eqnarray}\label{Estimate_1_Fix}
&&a_{h_k}\big(\bar{u}_{h_{k+1}}-u_{h_k}-\widehat{e}_{h_{k+1}},v_{h_{k+1}}\big)\nonumber\\
&=&a_{h_k}(\bar{u}_{h_{k+1}}-u_{h_k},v_{h_{k+1}})-b(\lambda_{h_k}u_{h_k},v_{h_{k+1}})
+(\nabla u_{h_k},\nabla v_{h_{k+1}})\nonumber\\
&&\ \ \ \ \ +(f(x,u_{h_k}),v_{h_{k+1}})\nonumber\\
&=&(\nabla \bar{u}_{h_{k+1}},\nabla v_{h_{k+1}})+(f_u(x,u_{h_k})(\bar{u}_{h_{k+1}}-u_{h_k}),v_{h_{k+1}})
-b(\lambda_{h_k}u_{h_k},v_{h_{k+1}})\nonumber\\
&&\ \ \ \ \ +(f(x,u_{h_k}),v_{h_{k+1}})\nonumber\\
&=&(f(x,u_{h_k})-f(x,\bar{u}_{h_{k+1}})+f_u(x,u_{h_k})(\bar{u}_{h_{k+1}}-u_{h_k}),v_{h_{k+1}})\nonumber\\
&&\ \ \ \ \ +b(\bar{\lambda}_{h_{k+1}}\bar{u}_{h_{k+1}}-\lambda_{h_k}u_{h_k},v_{h_{k+1}})\nonumber\\
&\lesssim&\big(\|\bar{u}_{h_{k+1}}-u_{h_k}\|_0+|\bar{\lambda}_{h_{k+1}}-\lambda_{h_k}|\big)\|v_{h_{k+1}}\|_0\nonumber\\
&\lesssim&\big(\|\bar{u}_{h_{k+1}}-\bar{u}_{h_k}\|_0+\|\bar{u}_{h_k}-u_{h_k}\|_0
+|\bar{\lambda}_{h_{k+1}}-\bar{\lambda}_{h_k}|+|\bar{\lambda}_{h_k}-\lambda_{h_k}|\big)\|v_{h_{k+1}}\|_1\nonumber\\
&\lesssim&\big(\eta_a(V_{h_k})\delta_{h_k}(u)+\|\bar{u}_{h_k}-u_{h_k}\|_0
+|\bar{\lambda}_{h_k}-\lambda_{h_k}|\big)\|v_{h_{k+1}}\|_1.
\end{eqnarray}
Combing (\ref{Inf_Sup_Condition}) and (\ref{Estimate_1_Fix}), we have the following estimates
\begin{eqnarray}\label{Estimate_u_tilde_u_h_{k+1}}
\|\bar{u}_{h_{k+1}}-u_{h_k}-\widehat{e}_{h_{k+1}}\|_1 &\lesssim
& \sup_{0\neq v_{h_{k+1}}\in V_{h_{k+1}}}\frac{a_{h_k}(\bar{u}_{h_{k+1}}-u_{h_k}
-\widehat{e}_{h_{k+1}},v_{h_{k+1}})}{\|v_{h_{k+1}}\|_1}\nonumber\\
&\lesssim& \eta_a(V_{h_k})\delta_{h_k}(u)+\|\bar{u}_{h_k}-u_{h_k}\|_0
+|\bar{\lambda}_{h_k}-\lambda_{h_k}|.
\end{eqnarray}
Then from (\ref{Estimate_u_tilde_u_h_{k+1}}) and the accuracy
$\|\widehat{e}_{h_{k+1}}-\widetilde{e}_{h_{k+1}}\|_1\lesssim \eta_a(V_{h_k})\delta_{h_k}(u)$,
the following inequality hold
\begin{eqnarray}\label{Error_tilde_u_h_{k+1}_u_final}
\|\bar{u}_{h_{k+1}}-\widetilde{u}_{h_{k+1}}\|_1&\lesssim&\eta_a(V_{h_k})\delta_{h_k}(u)+\|\bar{u}_{h_k}-u_{h_k}\|_0
+|\bar{\lambda}_{h_k}-\lambda_{h_k}|.
\end{eqnarray}

Now we come to estimate the error for the eigenpair solution
$(\lambda_{h_{k+1}},u_{h_{k+1}})$ of problem (\ref{Eigen_Augment_Problem}).
Based on  {\bf Assumptions A1}, {\bf A2} and {\bf C}, and the definition of $V_{H,h_{k+1}}$,
the following estimates hold
\begin{eqnarray}\label{Error_u_u_h_{k+1}}
\|\bar{u}_{h_{k+1}}-u_{h_{k+1}}\|_1\lesssim\inf_{v_{H,h_{k+1}}\in
V_{H,h_{k+1}}}\|\bar{u}_{h_{k+1}}-v_{H,h_{k+1}}\|_1\lesssim
\|\bar{u}_{h_{k+1}}-\widetilde{u}_{h_{k+1}}\|_1,
\end{eqnarray}
and
\begin{eqnarray}
&&|\bar{\lambda}_{h_{k+1}}-\lambda_{h_{k+1}}|+\|\bar{u}_{h_{k+1}}-u_{h_{k+1}}\|_0\lesssim
\eta_a(V_{H,h_{k+1}})\|\bar{u}_{h_{k+1}}-u_{h_{k+1}}\|_1,\label{Error_u_u_h_{k+1}_Negative}\\
&&|(f(x,\bar{u}_{h_{k+1}})-f(x,u_{h_{k+1}})-f_u(x,u_{h_{k+1}})(\bar{u}_{h_{k+1}}
-u_{h_{k+1}}),v)|\lesssim \nonumber\\
&&\quad\quad\quad\quad\quad\quad\eta_a(V_{H,h_{k+1}})\|\bar{u}_{h_{k+1}}-u_{h_{k+1}}\|_1\|v\|_1.
 \ \ \ \forall v\in V,\label{Error_Nonlinear_k+1}
\end{eqnarray}
From (\ref{Property_Eta_h}), (\ref{Error_tilde_u_h_{k+1}_u_final}), (\ref{Error_u_u_h_{k+1}}),
 (\ref{Error_u_u_h_{k+1}_Negative}) and (\ref{Error_Nonlinear_k+1}), the desired results
 (\ref{Estimate_u_u_h_{k+1}}), (\ref{Estimate_u_h_{k+1}_Nagative}) and (\ref{Nonlinear_Estimate_k+1})
 can be obtained and the proof is complete.
\end{proof}

\section{Multigrid scheme for the eigenvalue problem}
In this section, we introduce a type of multigrid correction
scheme based on the {\it One Correction Step} defined in Algorithms
\ref{Correction_Step_Fix} and \ref{Correction_Step_Newton}.
This type of multigrid method can obtain the optimal error
estimate as same as solving the nonlinear eigenvalue problem directly on the finest
finite element space.

In order to do multigrid scheme, we define a sequence of triangulations $\mathcal{T}_{h_k}$
of $\Omega$ determined as follows. Suppose $\mathcal{T}_{h_1}$ is produced from $\mathcal{T}_H$
 by regular refinement  and let $\mathcal{T}_{h_k}$ be obtained
from $\mathcal{T}_{h_{k-1}}$ via regular refinement (produce $\beta^d$ subelements) such that
$$h_k\approx\frac{1}{\beta}h_{k-1},\ \ \ \ k=2,\cdots,n.$$
Based on this sequence of meshes, we construct the corresponding linear finite element spaces such that
\begin{eqnarray}\label{FEM_Space_Series}
V_{H}\subseteq V_{h_1}\subset V_{h_2}\subset\cdots\subset V_{h_n},
\end{eqnarray}
and the following relation of approximation errors hold
\begin{eqnarray}\label{Error_k_k_1}
\delta_{h_k}(u)\approx\frac{1}{\beta}\delta_{h_{k-1}}(u),\ \ \ k=2,\cdots,n.
\end{eqnarray}

\begin{algorithm}\label{Multi_Correction}
Eigenvalue Multigrid Scheme
\begin{enumerate}
\item Construct a series of nested finite element
spaces $V_{h_1}, V_{h_2},\cdots,V_{h_n}$ such that
(\ref{FEM_Space_Series}) and (\ref{Error_k_k_1}) hold.
\item Solve the  following nonlinear eigenvalue problem:

Find $(\lambda_{h_1},u_{h_1})\in \mathcal{R}\times V_{h_1}$ such that
$b(u_{h_1},u_{h_1})=1$ and
\begin{eqnarray}\label{Initial_Eigen_Problem}
a(u_{h_1},v_{h_1})&=&\lambda_{h_1}b(u_{h_1},v_{h_1}),\ \ \ \ \forall v_{h_1}\in V_{h_1}.
\end{eqnarray}

\item Do $k=1,\cdots,n-1$\\
Obtain a new eigenpair approximation
$(\lambda_{h_{k+1}},u_{h_{k+1}})\in \mathcal{R}\times V_{h_{k+1}}$
by a correction step defined by Algorithm \ref{Correction_Step_Fix} or \ref{Correction_Step_Newton}
 \begin{eqnarray}
(\lambda_{h_{k+1}},u_{h_{k+1}})=Correction(V_H,\lambda_{h_k},u_{h_k},V_{h_{k+1}}).
\end{eqnarray}
End Do
\end{enumerate}
Finally, we obtain an eigenpair approximation
$(\lambda_{h_n},u_{h_n})\in \mathcal{R}\times V_{h_n}$.
\end{algorithm}
\begin{theorem}\label{Error_MultiGrid_Theorem}
Assume we have conditions of Theorem \ref{Error_Estimate_One_Correction_Theorem_Fix} 
for Algorithm \ref{Multi_Correction} with the
correction step defined by Algorithm \ref{Correction_Step_Fix}, or conditions of
Theorem \ref{Error_Estimate_One_Correction_Theorem}
for Algorithm \ref{Multi_Correction} with the
correction step defined by Algorithm \ref{Correction_Step_Newton}.
After implementing Algorithm \ref{Multi_Correction}, the resultant
eigenpair approximation $(\lambda_{h_n},u_{h_n})$ has the following
error estimates
\begin{eqnarray}
\|\bar{u}_{h_n}-u_{h_n}\|_1&\lesssim&\beta^2\eta_a(V_{h_n})\delta_{h_n}(u),\label{Multi_Correction_Err_fun}\\
|\bar{\lambda}_{h_n}-\lambda_{h_n}|+\|\bar{u}_{h_n}-u_{h_n}\|_0&\lesssim&\eta_a(V_{h_n})\delta_{h_n}(u).
\label{Multi_Correction_Err_fun_L2_Eigenvalue}
\end{eqnarray}
under the condition $C\beta\eta_a^2(V_H)<1$ for the constant $C$ hidden in concerned inequalities.
\end{theorem}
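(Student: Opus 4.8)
The plan is to argue by induction on the level index $k$ of Algorithm~\ref{Multi_Correction}, using the one-step reduction of Theorem~\ref{Error_Estimate_One_Correction_Theorem_Fix} (for the fixed-point variant) or Theorem~\ref{Error_Estimate_One_Correction_Theorem} (for the Newton variant) at each correction. Abbreviate $e_k:=\|\bar u_{h_k}-u_{h_k}\|_1$ and $d_k:=\|\bar u_{h_k}-u_{h_k}\|_0+|\bar\lambda_{h_k}-\lambda_{h_k}|$. Since Step~2 solves the nonlinear eigenproblem exactly in $V_{h_1}$, we have $(\lambda_{h_1},u_{h_1})=(\bar\lambda_{h_1},\bar u_{h_1})$, so $e_1=d_1=0$; this anchors the induction. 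For the inductive step, the correction-step theorems, together with $V_H\subseteq V_{H,h_{k+1}}$ and hence $\eta_a(V_{H,h_{k+1}})\le\eta_a(V_H)$ by (\ref{Property_Eta_h}), give for every $k\ge1$
\begin{eqnarray*}
e_{k+1}\ \lesssim\ \eta_a(V_{h_k})\delta_{h_k}(u)+d_k,\qquad
d_{k+1}\ \lesssim\ \eta_a(V_H)\,e_{k+1},
\end{eqnarray*}
where $d_k$ is exactly the coarse-level contribution appearing in $\varepsilon_{h_{k+1}}(u)$.

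Next I would prove, by induction on $k=2,\dots,n$, the sharper pair of bounds $e_k\lesssim\eta_a(V_{h_{k-1}})\delta_{h_{k-1}}(u)$ and $d_k\lesssim\eta_a(V_H)\eta_a(V_{h_{k-1}})\delta_{h_{k-1}}(u)$. Substituting the bound for $d_k$ into the inequality for $e_{k+1}$ yields $e_{k+1}\lesssim\eta_a(V_{h_k})\delta_{h_k}(u)+\eta_a(V_H)\eta_a(V_{h_{k-1}})\delta_{h_{k-1}}(u)$; invoking (\ref{Error_k_k_1}) and the matching one-refinement scaling of the spectral indicator, the last term is a factor of size $O(\beta^2\eta_a(V_H))$ times the first, so the recursion closes with an $n$-independent constant provided $C\beta\eta_a^2(V_H)<1$, where $C$ collects the hidden constants. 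The same smallness simultaneously keeps $\|u-u_{h_k}\|_1$ below the threshold under which the hypotheses of the correction-step theorems remain valid at every level --- in particular the inf--sup and boundedness conditions (\ref{Inf_Sup_Condition})--(\ref{Boundednness}) for the Newton variant and the accuracy requirement (\ref{Multigrid_Accuracy}) of the multigrid solve. Setting $k=n$ and using $\eta_a(V_{h_{n-1}})\delta_{h_{n-1}}(u)\approx\beta^2\eta_a(V_{h_n})\delta_{h_n}(u)$ gives (\ref{Multi_Correction_Err_fun}); feeding this into $d_n\lesssim\eta_a(V_H)e_n$ and absorbing the bounded factor $\eta_a(V_H)$ produces (\ref{Multi_Correction_Err_fun_L2_Eigenvalue}).

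The main obstacle is the bookkeeping of the $\beta$-powers that pile up through the recursion: each correction step reduces the error to the discretization scale $h_{k+1}$ but also carries over a multiplicative perturbation of size $\sim\eta_a(V_H)$ from the coarse eigenproblem, and these perturbations compound over the $n-1$ levels; the content of the argument is that this geometric accumulation stays bounded --- so the final estimate keeps the optimal order $\eta_a(V_{h_n})\delta_{h_n}(u)$ rather than degrading with $n$ --- which is exactly where $C\beta\eta_a^2(V_H)<1$ enters, and getting the constants (not merely the orders) to line up is the delicate point. A secondary subtlety is that the induction must propagate the $L^0$/eigenvalue bound $d_k$ alongside $e_k$, since it is $d_k$, not $e_k$, that enters $\varepsilon_{h_{k+1}}(u)$ at the next level; an induction on $e_k$ alone would not close.
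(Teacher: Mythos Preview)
Your proposal is correct and follows essentially the same approach as the paper: both start from $e_1=d_1=0$, combine the one-step correction estimates into the scalar recursion $e_{k+1}\lesssim\eta_a(V_{h_k})\delta_{h_k}(u)+\eta_a(V_H)\,e_k$, and invoke the $\beta$-scaling of $\delta_{h_k}$ and $\eta_a(V_{h_k})$ together with the smallness condition to keep the accumulated constant uniform in $n$. The paper unrolls this recursion into an explicit geometric sum $\sum_{k=1}^{n-1}(\eta_a(V_H))^{n-k-1}\eta_a(V_{h_k})\delta_{h_k}(u)\lesssim\bigl(\sum(\beta^2\eta_a(V_H))^{n-k-1}\bigr)\beta^2\eta_a(V_{h_n})\delta_{h_n}(u)$ rather than closing an induction, but that is only a presentational difference.
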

\begin{proof}
Here we only give the proof for the case of the
correction step defined by Algorithm \ref{Correction_Step_Fix} and the proof for
Algorithm \ref{Correction_Step_Newton} case can be given similarly.

From the definition of Algorithm \ref{Multi_Correction}, we know that
 $\bar{u}_{h_1}=u_{h_1}$, $\bar{\lambda}_{h_1}=\lambda_{h_1}$.
When $k=2$, from Theorem \ref{Error_Estimate_One_Correction_Theorem_Fix}
and Algorithm \ref{Multi_Correction}, the following
estimates hold
\begin{eqnarray}
\|\bar{u}_{h_2}-u_{h_2}\|_1&\lesssim&\eta_a(V_{h_1})\delta_{h_1}(u),\label{Error_u_h_1_1}\\
|\bar{\lambda}_{h_2}-\lambda_{h_1}|+\|\bar{u}_{h_2}-u_{h_2}\|_0&\lesssim&\eta_a(V_H)\|\bar{u}_{h_2}-u_{h_2}\|_1\nonumber\\
&\leq& \eta_a(V_H)\eta_a(V_{h_1})\delta_{h_1}(u),\label{Error_u_h_1_nagative_1}\\
|(f(x,\bar{u}_{h_2})-f(x,u_{h_2}),v)|&\lesssim& \eta_a(V_H)\|\bar{u}_{h_2}-u_{h_2}\|_1\|v\|_1\nonumber\\
&\lesssim& \eta_a(V_H)\eta_a(V_{h_1})\delta_{h_1}(u)\|v\|_1,\ \ \ \forall v\in V.\label{Nonlinear_Estimate_1_fix}
\end{eqnarray}

Based on Theorem \ref{Error_Estimate_One_Correction_Theorem_Fix}, (\ref{Error_k_k_1}),
 (\ref{Error_u_h_1_1})-(\ref{Nonlinear_Estimate_1_fix}) and recursive argument, the final
 eigenfunction approximation $u_{h_n}$ has the following estimates
\begin{eqnarray}\label{Error_u_h_n_Multi_Correction}
\|\bar{u}_{h_n} - u_{h_n}\|_1 &\lesssim& \eta_{a}(V_{h_{n-1}}) \delta_{h_{n-1}}(u)+
\|\bar{u}_{h_{n-1}} - u_{h_{n-1}}\|_0 +|\bar{\lambda}_{h_{n-1}}-\lambda_{h_{n-1}}|\nonumber\\
&\lesssim& \eta_{a}(V_{h_{n-1}}) \delta_{h_{n-1}}(u)+
\eta_a(V_H)\|\bar{u}_{h_{n-1}} - u_{h_{n-1}}\|_1\nonumber\\
&\lesssim& \eta_{a}(V_{h_{n-1}}) \delta_{h_{n-1}}(u)+
\eta_a(V_H)\eta_{a}(V_{h_{n-2}}) \delta_{h_{n-2}}(u)\nonumber\\
&&\ \ \ \ + \eta_a^2(V_H)\|\bar{u}_{h_{n-2}} - u_{h_{n-2}}\|_1\nonumber\\
&\lesssim& \sum^{n-1}_{k=1}\big(\eta_{a}(V_H)\big)^{n-k-1}\eta_a(V_{h_k})
\delta_{h_k}(u)\nonumber\\
&\lesssim&\Big(\sum^{n-1}_{k=1}\big(\beta^2\eta_{a}(V_H)\big)^{n-k-1}\Big)
\beta^2\eta_a(V_{h_n})\delta_{h_n}(u)\nonumber\\
&\lesssim& \frac{1}{1-\beta^2\eta_a(V_H)}\beta^2\eta_a(V_{h_n})\delta_{h_n}(u)
\lesssim \beta^2\eta_a(V_{h_n})\delta_{h_n}(u).
\end{eqnarray}
This is the desired result (\ref{Multi_Correction_Err_fun}).
Similarly to the proof for Theorem \ref{Error_Estimate_One_Correction_Theorem_Fix}, we can obtain the
result (\ref{Multi_Correction_Err_fun_L2_Eigenvalue}) and the proof is complete.
\end{proof}
\begin{remark}
The results (\ref{Multi_Correction_Err_fun}) and (\ref{Multi_Correction_Err_fun_L2_Eigenvalue}) mean that
eigenpair approximation by the multigrid method have the same accuracy both in $L^2(\Omega)$ and $H^1(\Omega)$
as we solve the nonlinear eigenvalue problem directly by the finite element method.
\end{remark}
\begin{corollary}
Under the conditions of Theorem \ref{Error_MultiGrid_Theorem}, the eigenpair approximation $(\lambda_{h_n},u_{h_n})$
 by the multigrid method defined by Algorithm \ref{Multi_Correction}
has the following error estimates
\begin{eqnarray}
\|u-u_{h_n}\|_1 &\lesssim&\delta_{h_n}(u),\label{Multi_Correction_Err_fun_Final}\\
|\lambda-\lambda_{h_n}|+\|u-u_{h_n}\|_0 &\lesssim&\eta_a(V_{h_n})\delta_{h_n}(u).
\label{Multi_Correction_Err_fun_L2_Eigenvalue_Final}
\end{eqnarray}
\end{corollary}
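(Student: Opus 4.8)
The plan is to derive the final corollary by combining the multigrid error estimates of Theorem \ref{Error_MultiGrid_Theorem} with the finite element approximation bounds of \textbf{Assumption A1}, using only the triangle inequality and the monotonicity property \eqref{Property_Eta_h} of $\eta_a$. First I would note that Theorem \ref{Error_MultiGrid_Theorem} already delivers
\begin{eqnarray*}
\|\bar{u}_{h_n}-u_{h_n}\|_1\lesssim\beta^2\eta_a(V_{h_n})\delta_{h_n}(u),\qquad
|\bar{\lambda}_{h_n}-\lambda_{h_n}|+\|\bar{u}_{h_n}-u_{h_n}\|_0\lesssim\eta_a(V_{h_n})\delta_{h_n}(u),
\end{eqnarray*}
and \textbf{Assumption A1} gives the corresponding bounds for the exact eigenpair against the direct finite element eigenpair $(\bar{\lambda}_{h_n},\bar{u}_{h_n})$ in $V_{h_n}$:
\begin{eqnarray*}
\|u-\bar{u}_{h_n}\|_1\lesssim\delta_{h_n}(u),\qquad
|\lambda-\bar{\lambda}_{h_n}|+\|u-\bar{u}_{h_n}\|_0\lesssim\eta_a(V_{h_n})\|u-\bar{u}_{h_n}\|_1.
\end{eqnarray*}

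Next I would apply the triangle inequality in $H^1$: $\|u-u_{h_n}\|_1\le\|u-\bar{u}_{h_n}\|_1+\|\bar{u}_{h_n}-u_{h_n}\|_1$. The first term is $\lesssim\delta_{h_n}(u)$ by A1; the second is $\lesssim\beta^2\eta_a(V_{h_n})\delta_{h_n}(u)$, and since $\eta_a(V_{h_n})\to0$ (and $\beta$ is a fixed refinement factor) this term is dominated by $\delta_{h_n}(u)$ for $h_n$ small enough. Hence $\|u-u_{h_n}\|_1\lesssim\delta_{h_n}(u)$, which is \eqref{Multi_Correction_Err_fun_Final}. For the $L^2$ and eigenvalue estimate I would again split via the triangle inequality: $|\lambda-\lambda_{h_n}|+\|u-u_{h_n}\|_0\le\big(|\lambda-\bar{\lambda}_{h_n}|+\|u-\bar{u}_{h_n}\|_0\big)+\big(|\bar{\lambda}_{h_n}-\lambda_{h_n}|+\|\bar{u}_{h_n}-u_{h_n}\|_0\big)$. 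The second bracket is $\lesssim\eta_a(V_{h_n})\delta_{h_n}(u)$ by Theorem \ref{Error_MultiGrid_Theorem}; the first bracket, by A1, is $\lesssim\eta_a(V_{h_n})\|u-\bar{u}_{h_n}\|_1\lesssim\eta_a(V_{h_n})\delta_{h_n}(u)$, using the already-established $H^1$ estimate. Adding the two gives \eqref{Multi_Correction_Err_fun_L2_Eigenvalue_Final}.

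This argument is essentially bookkeeping with the triangle inequality, so there is no serious obstacle; the only point requiring a word of care is the absorption step in the $H^1$ estimate, where one uses that $\beta^2\eta_a(V_{h_n})$ is uniformly bounded (indeed tends to zero) so that the multigrid defect term does not spoil the optimal order $\delta_{h_n}(u)$ — this is exactly the regime guaranteed by the hypothesis $C\beta\eta_a^2(V_H)<1$ together with \eqref{Property_Eta_h}, since $\eta_a(V_{h_n})\le\eta_a(V_H)$. Everything else follows from the two previously proved results and A1.
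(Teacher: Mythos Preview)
Your proposal is correct and is exactly the intended argument: the paper states this corollary without proof, as an immediate consequence of Theorem~\ref{Error_MultiGrid_Theorem} together with \textbf{Assumption A1}, and your triangle-inequality derivation is precisely how one reads off the result. The only minor remark is that in the $H^1$ absorption step you do not even need $h_n$ small enough---boundedness of $\beta^2\eta_a(V_{h_n})\le\beta^2\eta_a(V_H)$ already suffices to hide the constant in $\lesssim$.
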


\section{Work estimate of eigenvalue multigrid scheme}

 In this section, we estimate the computational work
for {\it Eigenvalue Multigrid Scheme} defined by Algorithm \ref{Multi_Correction}.
We will show that Algorithm \ref{Multi_Correction} makes solving eigenvalue problem need almost the
 same work as solving the corresponding linear boundary value problem by the
 multigrid method.

First, we define the dimension of each level linear finite element space as
\begin{eqnarray*}
N_k := {\rm dim}V_{h_k},\ \ \ k=1,\cdots,n.
\end{eqnarray*}
Then we have
\begin{eqnarray}\label{relation_dimension}
N_k \thickapprox\Big(\frac{1}{\beta}\Big)^{d(n-k)}N_n,\ \ \ k=1,\cdots,n.
\end{eqnarray}

The computational work for the second step in Algorithm \ref{Correction_Step_Fix} or
\ref{Correction_Step_Newton} is different
from the linear eigenvalue problems
\cite{LinXie,Xie_Steklov,Xie_Nonconforming,Xie_JCP}.
 In this step, we need to solve a nonlinear eigenvalue problem (\ref{Eigen_Augment_Problem_fix})
 or (\ref{Eigen_Augment_Problem}).
Always, some type of nonlinear iteration method (self-consistent iteration or
 Newton type iteration)
is used to solve this nonlinear eigenvalue problem. In each nonlinear iteration
step, we need to
build the matrix on the finite element space $V_{H,h_k}$ ($k=2,\cdots,n$) which
needs the computational
work $\mathcal{O}(N_k)$.
Fortunately, the matrix building can be carried out by the parallel way easily
in the finite element space since it has no data transfer.

\begin{theorem}
 Assume we use $m$ computing-nodes in Algorithm \ref{Multi_Correction},
the nonlinear eigenvalue problem solved in the coarse spaces $V_{H,h_k}$ ($k=1,\cdots, n$)
and $V_{h_1}$ need work $\mathcal{O}(M_H)$ and $\mathcal{O}(M_{h_1})$, respectively, and
the work of multigrid method for solving the boundary value problem in $V_{h_k}$ be $\mathcal{O}(N_k)$
for $k=2,3,\cdots,n$. Let $\varpi$ denote the nonlinear iteration times when we solve
the nonlinear eigenvalue problem (\ref{Eigen_Augment_Problem_fix})
 or (\ref{Eigen_Augment_Problem}).
Then in each computational node, the work involved
in Algorithm \ref{Multi_Correction} has the following estimate
\begin{eqnarray}\label{Computation_Work_Estimate}
{\rm Total\ work}&=&\mathcal{O}\Big(\big(1+\frac{\varpi}{m}\big)N_n
+ M_H\log N_n+M_{h_1}\Big).
\end{eqnarray}
\end{theorem}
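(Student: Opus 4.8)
The plan is to account separately for the three kinds of operations performed by Algorithm \ref{Multi_Correction} --- the initial solve on $V_{h_1}$, the sequence of multigrid solves of the auxiliary linear boundary value problems, and the sequence of small-scale nonlinear eigenvalue solves on the spaces $V_{H,h_k}$ --- and then sum the contributions, using the geometric decay of the dimensions $N_k$ from \eqref{relation_dimension}.

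First I would observe that Step (2) of Algorithm \ref{Multi_Correction} is performed exactly once and costs $\mathcal{O}(M_{h_1})$ by hypothesis, contributing the last term in \eqref{Computation_Work_Estimate}. Next, for each $k=1,\dots,n-1$ the correction step (Algorithm \ref{Correction_Step_Fix} or \ref{Correction_Step_Newton}) invoked on level $k+1$ does two things. Its first part solves an auxiliary linear boundary value problem on $V_{h_{k+1}}$ by multigrid, which costs $\mathcal{O}(N_{k+1})$; summing over $k$ and using \eqref{relation_dimension} gives
\begin{eqnarray*}
\sum_{k=2}^{n}\mathcal{O}(N_k)=\mathcal{O}\Big(\sum_{k=2}^{n}\big(\tfrac{1}{\beta}\big)^{d(n-k)}N_n\Big)
=\mathcal{O}(N_n),
\end{eqnarray*}
since $\beta\geq 2$ makes the geometric series convergent with bounded sum. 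Its second part solves the nonlinear eigenvalue problem \eqref{Eigen_Augment_Problem_fix} or \eqref{Eigen_Augment_Problem} on $V_{H,h_{k+1}}$; each of the $\varpi$ nonlinear iterations requires assembling the matrix on $V_{H,h_{k+1}}$ at cost $\mathcal{O}(N_{k+1})$ and then solving a nonlinear eigenvalue problem of essentially the dimension of $V_H$ at cost $\mathcal{O}(M_H)$. Distributing the $\varpi$ matrix assemblies over the $m$ computing-nodes gives a per-node assembly cost of $\mathcal{O}(\tfrac{\varpi}{m}N_{k+1})$ on level $k+1$, which again sums to $\mathcal{O}(\tfrac{\varpi}{m}N_n)$ by the same geometric-series argument. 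The nonlinear coarse solves cost $\mathcal{O}(M_H)$ per level over $n-1$ levels, and since $n\approx\log_\beta N_n$ (each refinement multiplies the number of elements by $\beta^d$), this totals $\mathcal{O}(M_H\log N_n)$. Adding the three bounds --- $\mathcal{O}(N_n)$ for the multigrid solves, $\mathcal{O}(\tfrac{\varpi}{m}N_n)$ for the parallel assemblies, $\mathcal{O}(M_H\log N_n)$ for the coarse nonlinear solves, $\mathcal{O}(M_{h_1})$ for the initial solve --- yields \eqref{Computation_Work_Estimate}.

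The only genuinely delicate point, and the one I would treat most carefully, is the bookkeeping of the nonlinear coarse solve: one must argue that the eigenvalue problem on $V_{H,h_{k+1}}=V_H+\mathrm{span}\{\widetilde{u}_{h_{k+1}}\}$ really does cost only $\mathcal{O}(M_H)$ per nonlinear iteration rather than scaling with $N_{k+1}$ --- this is because $\dim V_{H,h_{k+1}}\leq\dim V_H+1$, so the dense algebraic eigenvalue problem lives in a fixed small dimension, independent of the fine level. The fine-grid index $k+1$ enters that step only through the $\mathcal{O}(N_{k+1})$ cost of forming the stiffness and mass entries associated with the one extra basis function $\widetilde{u}_{h_{k+1}}$ (an integration over $\mathcal{T}_{h_{k+1}}$), which is exactly the assembly term already accounted for above. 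A secondary subtlety worth a sentence is that the identification $n\approx\log_\beta N_n$, hence the $\log N_n$ factor, relies on the uniform refinement ratio $h_k\approx\beta^{-1}h_{k-1}$ fixed in Section 5, so the number of levels is logarithmic in the final problem size; without that the coarse-solve term would instead read $\mathcal{O}(nM_H)$.
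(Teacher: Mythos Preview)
Your proposal is correct and follows essentially the same approach as the paper's own proof: decompose the total work into the initial solve $\mathcal{O}(M_{h_1})$, the per-level contributions $\mathcal{O}\big(N_k+M_H+\varpi N_k/m\big)$, sum the $N_k$-terms via the geometric relation \eqref{relation_dimension}, and convert the $(n-1)M_H$ coarse-solve cost into $\mathcal{O}(M_H\log N_n)$ using $n\approx\log_\beta N_n$. The paper's proof is simply a terser version of the same bookkeeping (it bundles the three contributions into a single $W_k$ and sums), while your explicit justification of why the coarse eigenproblem on $V_{H,h_{k+1}}$ costs only $\mathcal{O}(M_H)$ despite the fine-grid basis function, and your remark on the origin of the $\log N_n$ factor, are useful clarifications that the paper leaves implicit.
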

\begin{proof}
Let $W_k$ denote the work in any processor
of the correction step in the $k$-th finite element space $V_{h_k}$.
Then with the correction definition, we have
\begin{eqnarray}\label{work_k}
W_k&=&\mathcal{O}\left(N_k +M_H+\varpi\frac{N_k}{m}\right).
\end{eqnarray}
Iterating (\ref{work_k}) and using the fact (\ref{relation_dimension}), we obtain
\begin{eqnarray}\label{Work_Estimate}
\text{Total work} &=& \sum_{k=1}^nW_k\nonumber =
\mathcal{O}\left(M_{h_1}+\sum_{k=2}^n
\Big(N_k + M_H+\varpi\frac{N_k}{m}\Big)\right)\nonumber\\
&=& \mathcal{O}\Big(\sum_{k=2}^n\Big(1+\frac{\varpi}{m}\Big)N_k
+ (n-1) M_H + M_{h_1}\Big)\nonumber\\
&=& \mathcal{O}\left(\sum_{k=2}^n
\Big(\frac{1}{\beta}\Big)^{d(n-k)}\Big(1+\frac{\varpi}{m}\Big)N_n
+ M_H\log N_n+M_{h_1}\right)\nonumber\\
&=& \mathcal{O}\left(\big(1+\frac{\varpi}{m}\big)N_n
+ M_H\log N_n+M_{h_1}\right).
\end{eqnarray}
This is the desired result and we complete the proof.
\end{proof}
\begin{remark}
Since we have a good enough initial solution $\widetilde{u}_{h_{k+1}}$
in the second step of Algorithm \ref{Correction_Step_Fix} or \ref{Correction_Step_Newton},
then solving the nonlinear eigenvalue problem (\ref{Eigen_Augment_Problem_fix})
 or (\ref{Eigen_Augment_Problem}) always does not
need many nonlinear iteration  times (always $\varpi\leq 3$).
In this case, the complexity in each computational node will be $\mathcal{O}(N_n)$ provided
$M_H\ll N_n$ and $M_{h_1}\leq N_n$.
\end{remark}

\section{Concluding remarks}
In this paper, we give a type of multigrid scheme
to solve nonlinear eigenvalue problems. The idea here is to use the multilevel correction method
to transform the solution of the nonlinear eigenvalue problem to a series of solutions of the corresponding linear
 boundary value problems with multigrid method and a series of nonlinear eigenvalue problems
 on the coarsest finite element space.
 The proposed multigrid method can be applied to practical nonlinear eigenvalue problems
 \cite{CancesChakirMaday,ChenGongHeYangZhou,ChenGongZhou,ChenHeZhou}.

We can replace the multigrid method by other types of efficient iteration schemes
 such as algebraic multigrid method, the type of preconditioned schemes based on
 the subspace decomposition and subspace corrections (see, e.g., \cite{BrennerScott, Xu}), and the
 domain decomposition method (see, e.g., \cite{ToselliWidlund}).
 Furthermore, the framework here can also be coupled with
 parallel method and the adaptive refinement technique.
   These will be investigated in  our future work.


\begin{thebibliography}{17}

%


\bibitem{Bao}
W. Bao, The nonlinear Schr\"{o}inger equation and applications in Bose-Einstein
condensation and plasma physics, Master Review, Lecture Note Series, vol. 9,
IMS, NUS, 2007.

\bibitem{BaoDu}
W. Bao, Q. Du, Computing the ground state solution of Bose-Einstein condensates
by a normalized gradient flow, SIAM J. Sci. Comput., 25 (2004), 1674-1697

\bibitem{Bramble}
J. Bramble, {\em Multigrid Methods}, Pitman Research Notes in Mathematics, V. 294,
John Wiley and Sons, 1993.





\bibitem{BrambleZhang}
J. Bramble and X. Zhang, {\em The Analysis of Multigrid Methods}, Handbook of Numerical Analysis,
Vol. VII, P. G. Ciarlet and J. L. Lions, eds., Elsevier Science, 173-415, 2000.


\bibitem{BrennerScott}
S. Brenner and L. Scott, {\em The Mathematical Theory of Finite Element
Methods}, New York: Springer-Verlag, 1994.


\bibitem{CancesChakirMaday}
E. Canc\`{e}s, R. Chakir, Y. Maday, {\em Numerical analysis of nonlinear eigenvalue
problems}, J. Sci. Comput., 45 (2010), 90-117.


\bibitem{ChenGongHeYangZhou}
H. Chen, X. Gong, L. He, Z. Yang and A. Zhou,
{\em Numerical analysis of finite dimensional approximations of Kohn-Sham models},
Adv. Comput. Math., 38 (2013), 225-256.


\bibitem{ChenGongZhou}
H. Chen, X. Gong and A. Zhou, {\em Numerical approximations of a nonlinear eigenvalue
problem and applications to a density functional model},
Math. Methods Applied Sci., 33 (2010), 1723-1742.

\bibitem{ChenHeZhou}
H. Chen, L. He and A. Zhou,
{\em Finite element approximations of nonlinear eigenvalue problems in quantum physics},
Comput. Meth. Appl. Mech. Engrg., 200 (2011), 1846-1865.


\bibitem{Ciarlet}
P. G. Ciarlet, {\em The Finite Element Method for Elliptic Problems},
North-holland Amsterdam, 1978.




\bibitem{Hackbusch_Book}
W. Hackbusch, {\em Multi-grid Methods and Applications}, Springer-Verlag, Berlin, 1985.


\bibitem{KohnSham}
W. Kohn and L. Sham, {\em Self-consistent equations including exchange and correlation effects},
Phys. Rev. A, 140 (1965), 4743-4754.

\bibitem{LinXie}
Q. Lin and H. Xie, {\em A multi-level correction scheme for eigenvalue problems},
Math. Comp., doi: S 0025-5718(2014)02825-1, March 10, 2014.

\bibitem{LinXie_Multigrid}
Q. Lin and H. Xie,  {\em
 A Type of Multigrid Method for Eigenvalue Problem}, Research Report of LSEC, Report No. ICMSEC-2011-06.





\bibitem{Martin}
R. Martin, {\em Electronic Structure: Basic Theory and Practical Methods}, Cambridge University
Press, London, (2004).

\bibitem{McCormick}
S. McCormick, ed., {\em Multigrid Methods}. SIAM Frontiers in Applied Matmematics 3. Society for Industrial and
Applied Mathematics, Philadelphia, 1987.

\bibitem{ParrYang}
R. Parr and W. Yang, {\em Density-Functional Theory of Atoms and Molecules}, Oxford University
Press, New York, Clarendon Press, Oxford, (1994).

\bibitem{ScottZhang}
L. Scott and S. Zhang, {\em Higher dimensional non-nested multigrid methods}, Math. Comp.,
 58 (1992), 457-466.

 \bibitem{Shaidurov}
V. Shaidurov, {\em Multigrid Methods for Finite Elements}, Kluwer
Academic Publics, Netherlands, 1995.

\bibitem{SulemSulem}
C. Sulem and P. Sulem, {\em The Nonlinear Schr\"{o}dinger Equation: Self-focusing and Wave Collapse},
Springer, New York, 1999.

\bibitem{ToselliWidlund}
A. Toselli and O. Widlund, {\em Domain Decomposition Methods: Algorithm and Theory},
Springer-Verlag, Berlin Heidelberg, 2005.

\bibitem{Xie_Steklov}
H. Xie, {\em A type of multilevel method for the Steklov eigenvalue problem},
IMA J. Numer. Anal., doi:10.1093/imanum/drt009, 2013.

\bibitem{Xie_Nonconforming}
H. Xie, {\em A type of multi-level correction method for eigenvalue problems by
nonconforming finite element methods},
Research Report in ICMSEC, 2012-10 (2012).

\bibitem{Xie_JCP}
H. Xie, {\em A multigrid method for eigenvalue problem}, J. Comput. Phys., 274 (2014),
550-561.


\bibitem{Xu}
J. Xu, {\em Iterative methods by space decomposition and subspace
correction}, SIAM Review, 34(4) (1992), 581-613.

\bibitem{Xu_Two_Grid}
J. Xu, {\em A new class of iterative methods for nonselfadjoint or indefinite problems},
SIAM J. Numer. Anal., 29 (1992), 303-319.

\bibitem{Xu_Nonlinear}
J. Xu, {\em A novel two-grid method for semilinear elliptic equations},
SIAM J. Sci. Comput., 15 (1994), 231-237.






\end{thebibliography}
\end{document}